\DeclareMathOperator{\BS}{BS}
\DeclareMathOperator{\laspan}{span}
\DeclareMathOperator{\Aut}{Aut}
\DeclareMathOperator{\im}{im}
\DeclareMathOperator{\ind}{ind}
\DeclareMathOperator{\GL}{\bf GL}
\newtheorem{theorem}{Theorem}[section]
\newtheorem{proposition}[theorem]{Proposition}
\newtheorem{corollary}[theorem]{Corollary}
\newtheorem{definition}[theorem]{Definition}
\newtheorem{fact}[theorem]{Fact}
\title{Irreducible Representations of Baumslag-Solitar~Groups}
\author{Daniel McLaury}
\address{Department of Mathematics, The University of Oklahoma, Norman, OK, USA}
\email{daniel.mclaury@gmail.com}
\date{}
\begin{document}

\maketitle

\begin{abstract}
We classify the finite-dimensional irreducible linear representations of the \emph{Baumslag-Solitar groups} $\BS(p, q) = \langle a, b \; | \; ab^p = b^q a \rangle$ for relatively prime $p$ and $q$.  The general strategy of the argument is to consider the matrix group given by image of a representation and study its Zariski closure in $\GL_n$.
\end{abstract}

\section{Baumslag-Solitar Groups}

A group $G$ is said to be \emph{residually finite} if, for each $g \in G$, there is a finite group $H$ and a homomorphism $h : G \rightarrow H$ such that $h(g) \neq 1$.  It is said to be \emph{Hopfian} if each epimorphism $\phi : G \twoheadrightarrow G$ is an automorphism; equivalently, a group is non-Hopfian if it is isomorphic to one of its proper quotients.  Residually finite groups are Hopfian, but not conversely.  For nonzero integers $p$ and $q$, the family of \emph{Baumslag-Solitar groups} 
\begin{equation}
\BS(p, q) = \langle a, b \; | \; ab^p = b^q a \rangle.
\end{equation}
was defined in\cite{BaumslagSolitar} to provide examples of finitely presented non-Hopfian groups.  Note that $BS(p, q) = BS(-p, -q)$, and further that $\BS(p, q) \cong \BS(q, p)$ by $a \mapsto a^{-1}$.  The following definition helps classify the Baumslag-Solitar subgroups:
\begin{definition}
Positive integers $p$ and $q$ are said to be \emph{meshed} if either
\begin{enumerate}
\item $p|q$ or $q|p$, or
\item $p$ and $q$ have precisely the same prime divisors.
\end{enumerate}
\end{definition}
\begin{theorem}
(\cite{BaumslagSolitar}, Theorem 1; \cite{MeskinNonresiduallyFinite}, Theorem C)
Let $p$ and $q$ be nonzero integers.  Then $\BS(p, q)$ is 
\begin{enumerate}
  \item residually finite, and thus Hopfian, if $|p| = |q|$ or $|p| = 1$ or $|q| = 1$;
  \item Hopfian if $p$ and $q$ are meshed;
  \item non-Hopfian if $p$ and $q$ are not meshed.
\end{enumerate}
\end{theorem}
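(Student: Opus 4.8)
We sketch the approach we would take, treating the three statements in turn. Throughout we regard $G = \BS(p,q)$ as the HNN extension of the infinite cyclic group $\langle b\rangle$ with stable letter $a$ and associated subgroups $\langle b^p\rangle$, $\langle b^q\rangle$ identified through the defining relation $a b^p a^{-1} = b^q$, and the principal tool is Britton's lemma: a word $b^{k_0} a^{\varepsilon_1} b^{k_1}\cdots a^{\varepsilon_n} b^{k_n}$ with $n \geq 1$ can represent $1$ only if it contains a subword $a b^{k} a^{-1}$ with $p \mid k$ or a subword $a^{-1} b^{k} a$ with $q \mid k$. We also use the isomorphism $\BS(p,q) \cong \BS(q,p)$ recorded above, Mal'cev's theorem that a finitely generated residually finite group is Hopfian, and P.\ Hall's theorem that a finitely generated metabelian group is residually finite.

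Statement (i) we would obtain directly. If $|q| = 1$ (and symmetrically if $|p| = 1$), then $G \cong \BS(1, |p|) = \langle a, b \mid a b a^{-1} = b^{|p|}\rangle$ is the finitely generated metabelian group $\mathbb{Z}[1/|p|] \rtimes \mathbb{Z}$, hence residually finite. If $|p| = |q| = n$, then $b^n$ is central in $G$ and $G/\langle b^n\rangle \cong \mathbb{Z} * (\mathbb{Z}/n\mathbb{Z})$ is residually finite (a free product of residually finite groups); any nontrivial $g \in G$ is then separated from $1$ either by composing $G \to G/\langle b^n\rangle$ with a finite quotient (when $g \notin \langle b^n\rangle$) or by the homomorphism $G \to \mathbb{Z}/M\mathbb{Z}$ sending $b \mapsto 1$, $a \mapsto 0$ for $M$ large (when $g \in \langle b^n\rangle \setminus \{1\}$). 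In all of these cases $G$ is residually finite, and hence Hopfian.

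For statement (iii), suppose $p, q$ are not meshed. Then $q \nmid p$ and the prime-divisor sets of $p$ and $q$ differ, so, after replacing $\BS(p,q)$ by the isomorphic $\BS(q,p)$ if necessary, we may fix a prime $r$ with $r \mid p$ and $r \nmid q$ and write $p = rm$; note that $q \nmid m$ (since $q \nmid p$ and $m \mid p$) and that $|q| \geq 2$ (otherwise $q \mid p$). Define $\phi\colon G \to G$ by $a \mapsto a$ and $b \mapsto b^r$ (for $\BS(2,3)$ this is the classical non-Hopfian example). It is a well-defined homomorphism, since $\phi(a b^p a^{-1}) = a b^{rp} a^{-1} = (a b^p a^{-1})^r = b^{rq} = \phi(b^q)$, and it is surjective, since its image contains $a$ and $b^r$, hence contains $a b^p a^{-1} = b^q$, hence contains $b^{\gcd(r,q)} = b$. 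It is not injective: setting $w = [\,a b^m a^{-1},\, b\,]$, on one hand $\phi(w) = [\,a b^{rm} a^{-1},\, b^r\,] = [\,a b^p a^{-1},\, b^r\,] = [\,b^q,\, b^r\,] = 1$, and on the other hand $w$, written as $a\, b^m a^{-1}\, b\, a\, b^{-m} a^{-1}\, b^{-1}$, is reduced — the subwords occurring between consecutive occurrences of $a^{\pm1}$ are $b^m$, $b$, $b^{-m}$, and none of these is a pinch because $p \nmid m$ and $|q| \geq 2$ — so $w \neq 1$ by Britton's lemma since $w$ involves $a^{\pm1}$ four times. Hence $\phi$ is an epimorphism that is not an automorphism, and $G$ is non-Hopfian.

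Statement (ii) is the difficult direction, and the step we expect to be the main obstacle. Assume $p, q$ are meshed. If in addition $|p| = |q|$ or $|p| = 1$ or $|q| = 1$ we are done by (i), so assume $|p| \neq |q|$ and $|p|, |q| \geq 2$ — for instance $\BS(2,4)$ — in which case $G$ is in fact not residually finite, so that Hopficity cannot be inferred from (i) and must be established by a direct study of an arbitrary epimorphism $\phi\colon G \twoheadrightarrow G$. The plan is: first, $\phi$ induces an epimorphism of $G^{\mathrm{ab}} \cong \mathbb{Z} \oplus \mathbb{Z}/|q - p|\mathbb{Z}$, which, being finitely generated abelian, is Hopfian, so $\phi$ is an automorphism on abelianizations, forcing $\phi(b) \equiv b^{s}$ with $\gcd(s, q - p) = 1$ and $\phi(a) \equiv a^{\pm 1} b^{c}$ modulo $[G,G]$; second, $\phi(b)^p$ and $\phi(b)^q$ are conjugate in $G$, and since $|p| \neq |q|$ an element whose $p$th and $q$th powers are conjugate cannot be hyperbolic for the action of $G$ on its Bass--Serre tree (distinct powers of a hyperbolic element have distinct translation lengths), so $\phi(b)$ is elliptic, hence conjugate to a power of $b$, and after replacing $\phi$ by its composite with a suitable inner automorphism we may assume $\phi(b) = b^{s}$ with $s \neq 0$, the defining relation now reading $\phi(a)\, b^{sp}\, \phi(a)^{-1} = b^{sq}$. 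The remaining and principal difficulty is to use the meshed hypothesis — treating its three shapes ($p \mid q$, $q \mid p$, and $p,q$ having equal prime supports) separately — to force $s = \pm 1$ and to restrict $\phi(a)$ enough (essentially to the form $b^{j} a^{\pm 1} b^{k}$) that a reduced-word computation shows $\phi$ to be injective; it is precisely here that no soft residual-finiteness argument is available. The complete treatment is carried out in \cite{BaumslagSolitar} and \cite{MeskinNonresiduallyFinite}.
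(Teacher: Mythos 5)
First, note that the paper does not prove this theorem at all: it is quoted with citations to Baumslag--Solitar and to Meskin, so there is no in-paper argument to compare yours against. Judged on its own terms, your sketch is genuinely incomplete in one essential place, namely part (ii). For meshed $p,q$ with $|p|\neq|q|$ and $|p|,|q|\geq 2$ you correctly observe that no residual-finiteness argument is available and you lay out a plausible opening (Hopficity of the abelianization, ellipticity of $\phi(b)$ in the Bass--Serre tree, reduction to $\phi(b)=b^s$), but you then state that the ``remaining and principal difficulty'' is to use the meshed hypothesis to force $s=\pm1$ and to pin down $\phi(a)$, and you defer that entirely to the literature. That step \emph{is} the content of the theorem in this case --- it is exactly where Meskin's Theorem C does its work --- so as written part (ii) is a citation, not a proof.

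There is also a smaller but real error in part (i): when $|p|=|q|=n$ you assert that $b^n$ is central. That holds only for $p=q$; for $p=-q$ the relation gives $ab^na^{-1}=b^{-n}$, so $\langle b^n\rangle$ is normal but not central, and, more importantly, your proposed separating map $G\to\mathbb{Z}/M\mathbb{Z}$ with $b\mapsto 1$, $a\mapsto 0$ is a homomorphism only when $M$ divides $p-q=2n$, so it cannot separate $b^{kn}$ from $1$ for $|k|$ large. You need a non-abelian finite quotient there, e.g.\ $\mathbb{Z}/nk\mathbb{Z}\rtimes\mathbb{Z}/2\mathbb{Z}$ with $a$ acting by inversion. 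The rest of your sketch is sound: the $|q|=1$ case via the finitely generated metabelian group $\mathbb{Z}[1/|p|]\rtimes\mathbb{Z}$ and P.~Hall's theorem is correct, and part (iii) --- the endomorphism $a\mapsto a$, $b\mapsto b^r$ together with the commutator witness that $\phi$ kills but that Britton's lemma shows is nontrivial --- is exactly the classical Baumslag--Solitar argument and is carried out correctly, including the needed checks $p\nmid m$ and $|q|\geq 2$.
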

As finitely-generated matrix groups are known to be Hopfian (see \cite{MalcevFaithfulRepInfGrp}), the groups $\BS(p, q)$ with $p$ and $q$ unmeshed do not have any faithful representations.

The following fact about Baumslag-Solitar groups will be useful:
\begin{proposition}
\label{propbpkbqk}
For any integer $k$, $b^{p^k} = a^{-k} b^{q^k} a^k$ in $\BS(p, q)$.
\end{proposition}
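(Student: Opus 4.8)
The plan is to deduce this from a single application of the defining relation, iterated, by induction on $k$. First I would rewrite the relator in conjugation form: $ab^p = b^q a$ is equivalent to $b^p = a^{-1} b^q a$ (and, inverting the conjugating element, to $b^q = a b^p a^{-1}$). The proposition then asserts precisely that conjugating $b^{q^k}$ by $a^k$ yields $b^{p^k}$, i.e.\ that applying the basic conjugation $k$ times rescales the exponent $q \to p$ at each stage. I would prove the identity for $k \ge 0$ by induction; the case $k \le 0$ is the mirror statement and is handled by the same argument with the relation in the form $b^q = a b^p a^{-1}$ (equivalently, via the isomorphism $\BS(p,q) \cong \BS(q,p)$, $a \mapsto a^{-1}$).

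For $k = 0$ both sides are simply $b$, since $p^0 = q^0 = 1$ and $a^0 = 1$; alternatively one may start the induction at $k = 1$, where the claim is exactly the rewritten relation $b^p = a^{-1} b^q a$. For the inductive step, suppose $a^{-k} b^{q^k} a^k = b^{p^k}$. Then
\[
b^{p^{k+1}} = \bigl(b^{p^k}\bigr)^{p} = \bigl(a^{-k} b^{q^k} a^{k}\bigr)^{p} = a^{-k}\bigl(b^{q^k}\bigr)^{p} a^{k} = a^{-k}\bigl(b^{p}\bigr)^{q^k} a^{k} = a^{-k}\bigl(a^{-1} b^{q} a\bigr)^{q^k} a^{k} = a^{-(k+1)} b^{q^{k+1}} a^{k+1},
\]
where the second equality is the inductive hypothesis, the fifth substitutes $b^p = a^{-1} b^q a$, and the remaining equalities use only exponent arithmetic ($p^{k+1} = p \cdot p^k$, $\; q^k p = p q^k$, $\; q \cdot q^k = q^{k+1}$) together with the fact that conjugation by a fixed element is a group automorphism, so that $\left(x y x^{-1}\right)^{n} = x y^{n} x^{-1}$. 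This closes the induction.

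Since the argument is a short induction, I do not anticipate any genuine obstacle; the only point needing a little care is the exponent bookkeeping in the inductive step — specifically, peeling the outermost $a^{\pm 1}$ off the conjugation (rather than the innermost) so that the inductive hypothesis becomes applicable, and pairing $p^{k+1} = p \cdot p^k$ with $q^{k+1} = q \cdot q^k$ so that each side acquires exactly one extra factor of $p$, resp.\ $q$.
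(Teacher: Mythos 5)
Your proof is correct and is essentially the paper's argument: both rewrite the relator as $b^p = a^{-1}b^q a$, use that conjugation preserves powers (the paper's $b^{pj} = a^{-1}b^{qj}a$), and iterate $k$ times, the only difference being that you package the iteration as a formal induction while the paper writes it as a telescoping chain $b^{p^k} = a^{-i} b^{q^i p^{k-i}} a^{i}$. Your treatment of $k<0$ as the mirror case is exactly as brief as the paper's ("the $k<0$ case is analogous").
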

\begin{proof}
Rewrite the Baumslag-Solitar relation as $b^p = a^{-1} b^q a$ and take the $j$-th power of each side, giving
\begin{equation}
b^{pj} = (a^{-1} b^q a)^j = a^{-1} b^{qj} a
\end{equation}
Now apply this to $b^{p^k}$ repeatedly.  In the case $k > 0$,
\begin{align}
b^{p^k} &= a^{-1} b^{q p^{k-1}} a \\
	&= a^{-2} b^{q^2 p^{k-2}} a^2 \\
	&\phantom{=}\vdots \notag \\
	&= a^{-k} b^{q^k} a^k;
\end{align}
the $k < 0$ case is analogous, and the $k = 0$ case is immediate.
\end{proof}
As a reference for representations of non-Hopfian Baumslag-Solitar groups, see \cite{GoodmanPhD}.  There, Goodman shows the existence of some particular representations of $\BS(p, q)$ when $p$ and $q$ are relatively prime, and characterizes the geometry of the variety of $n$-dimensional representations of $\BS(p, q)$ at these points.

\section{Notation}

Fix nonzero, relatively prime integers $p$ and $q$, not both $\pm 1$, and let 
\begin{equation}
\Gamma = \BS(p, q) = \langle a, b \; | \; a b^p = b^q a \rangle.
\end{equation}
Fix also an $n$-dimensional complex $\Gamma$-module $V$, and write $\rho : \Gamma \rightarrow \GL_n(\mathbb{C})$ for the corresponding representation.  Let $G$ denote the image of $\rho$, and write $A$ and $B$ for $\rho(a)$ and $\rho(b)$, respectively.  Finally, let $H$ denote the subgroup of $G$ generated by $B$.

\section{The Structure of $G$}

Write ${\bf G} = \overline{G}$ and ${\bf H} = \overline{H}$ for the Zariski closures of $G$ and $H$ as subsets of $\GL_n(\mathbb{C})$.  We assume the standard results about linear algebraic groups, as found in chapters 1--3 of \cite{SpringerLinAlgGrp}.  In particular, recall that:
\begin{fact}
\label{lemmaFI}
Let ${\bf G}$ be an algebraic group. If $K \leq H \leq \bf G$ with $[H:K]$ finite,
\begin{enumerate}
  \item $\left[\overline{H} : \overline{K}\right]$ is finite as well,
  \item $\left[\overline{H} : \overline{K}\right]$ divides $[H:K]$,
  \item $\overline{K}^0 = \overline{H}^0$.
\end{enumerate}
\end{fact}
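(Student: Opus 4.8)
The plan is to reduce everything to one observation: in an algebraic group, left translation $x \mapsto gx$ is an isomorphism of varieties, hence a homeomorphism for the Zariski topology, and therefore the closure operation commutes with finite unions of translates of a fixed set. Concretely, if $[H:K] = m$ and $H = \bigcup_{i=1}^{m} h_i K$ is a decomposition into left cosets, then, since the closure of a finite union is the union of the closures and $\overline{h_i K} = h_i\overline{K}$,
\begin{equation}
\overline{H} \;=\; \overline{\bigcup_{i=1}^{m} h_i K} \;=\; \bigcup_{i=1}^{m} \overline{h_i K} \;=\; \bigcup_{i=1}^{m} h_i \overline{K}.
\end{equation}
The closure of a subgroup is again a subgroup, so $\overline{K}$ is a closed subgroup of $\overline{H}$, and the displayed equality presents $\overline{H}$ as a union of $m$ left cosets of $\overline{K}$; in particular $[\overline{H}:\overline{K}] \leq m$, which already gives (i).

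For (ii) I would make the counting precise with a group action rather than trying to argue divisibility directly from the coset decomposition (the cosets $h_i\overline{K}$ need not be distinct). Define $\phi : H/K \to \overline{H}/\overline{K}$ by $hK \mapsto h\overline{K}$. This is well defined because $K \subseteq \overline{K}$, it is $H$-equivariant for the left-translation actions of $H$, and it is surjective by the displayed decomposition of $\overline{H}$. Both $H/K$ and $\overline{H}/\overline{K}$ are transitive $H$-sets — transitivity on the target again uses that the representatives $h_i$ lie in $H$ — so for any two points of the target, a suitable element of $H$ carries one fiber of $\phi$ onto the other, and hence all fibers of $\phi$ have the same cardinality. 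Therefore $[H:K] = [\overline{H}:\overline{K}]\cdot|\phi^{-1}(\overline{K})|$, so $[\overline{H}:\overline{K}]$ divides $[H:K]$, which is (ii) and re-derives (i).

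For (iii), recall two standard facts: a finite-index closed subgroup of an algebraic group is also open (its complement is a finite union of translates of a closed set), and a connected algebraic group has no proper open subgroup. By (i), $\overline{K}\cap\overline{H}^0$ has finite index in $\overline{H}^0$ and is closed, hence open in the connected group $\overline{H}^0$, so $\overline{K}\cap\overline{H}^0 = \overline{H}^0$, i.e.\ $\overline{H}^0 \subseteq \overline{K}$. Since $\overline{H}^0$ is connected and contains the identity, $\overline{H}^0 \subseteq \overline{K}^0$. The reverse inclusion is immediate because $\overline{K}^0$ is a connected subset of $\overline{H}$ containing the identity, so $\overline{K}^0 = \overline{H}^0$.

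The main obstacle is not any single deep step but the bookkeeping in (ii): one must resist concluding divisibility straight from ``$\overline{H}$ is covered by $m$ cosets of $\overline{K}$'' and instead extract it cleanly from the equal-fiber property of an equivariant surjection of transitive $H$-sets. Everything else is a direct application of the standard theory of linear algebraic groups in \cite{SpringerLinAlgGrp}.
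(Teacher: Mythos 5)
Your argument is correct in all three parts. Note, though, that the paper does not prove this statement at all: it is labeled a \emph{Fact} and simply quoted from chapters 1--3 of \cite{SpringerLinAlgGrp}, so there is no in-paper proof to compare against. Your derivation is a sound self-contained substitute. The decomposition $\overline{H} = \bigcup_i h_i\overline{K}$ via homeomorphy of translation correctly gives (i); your caution about (ii) is well placed, since the cosets $h_i\overline{K}$ can indeed coincide, and the equivariant surjection $H/K \to \overline{H}/\overline{K}$ of transitive $H$-sets (equivalently, the identity $[H:K] = [\overline{H}:\overline{K}]\cdot[H\cap\overline{K}:K]$, using that every $\overline{K}$-coset of $\overline{H}$ meets $H$) cleanly yields the divisibility; and (iii) follows as you say from the facts that a closed finite-index subgroup is open and that a connected group has no proper open subgroup, together with the observation that $\overline{K}^0$ is a connected subgroup of $\overline{H}$ through the identity. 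The only point worth making explicit is that $\overline{K}$ is closed \emph{in} $\overline{H}$ and has finite index there by (i), so that $\overline{K}\cap\overline{H}^0$ really is a closed finite-index subgroup of $\overline{H}^0$; you use this but state it only implicitly.
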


Key to the classification is that, while $\langle b \rangle$ is not a normal subgroup of $\Gamma$, its image $H$ will be normal in $G$.  This fact is a consequence of the following two theorems:

\begin{theorem}
\label{thmhnormg}
${\bf H} \trianglelefteq {\bf G}$.
\end{theorem}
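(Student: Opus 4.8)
The plan is to show that both $A$ and $B$ lie in the normalizer $N_{\mathbf{G}}(\mathbf{H})$. Since the normalizer of a closed subgroup of an algebraic group is itself a closed subgroup, $N_{\mathbf{G}}(\mathbf{H})$ will then contain the subgroup $\langle A, B\rangle = G$ and hence, being closed, all of $\overline{G} = \mathbf{G}$, giving $\mathbf{H}\trianglelefteq\mathbf{G}$. That $B$ normalizes $\mathbf{H}$ is immediate from $B\in H\subseteq\mathbf{H}$, so everything reduces to proving $A\mathbf{H}A^{-1}=\mathbf{H}$. Write $\phi$ for conjugation by $A$; it is an automorphism of $\GL_n(\mathbb{C})$ restricting to an automorphism of $\mathbf{G}$ (as $A\in\mathbf{G}$). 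The defining relation $AB^p = B^qA$ gives $\phi(B^p) = B^q$, hence $\phi(\overline{\langle B^p\rangle}) = \overline{\langle B^q\rangle}$.

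First I would handle the identity component. The indices $[\langle B\rangle:\langle B^p\rangle]$ and $[\langle B\rangle:\langle B^q\rangle]$ are finite, dividing $|p|$ and $|q|$ respectively, so Fact~\ref{lemmaFI} gives $\overline{\langle B^p\rangle}^0 = \overline{\langle B^q\rangle}^0 = \mathbf{H}^0$. Applying $\phi$ to $\overline{\langle B^p\rangle}^0$ then yields $\phi(\mathbf{H}^0) = \mathbf{H}^0$, so $A$ normalizes $\mathbf{H}^0$; since $B$ does too ($\mathbf{H}^0$ is characteristic in $\mathbf{H}$), the closed subgroup $N_{\mathbf{G}}(\mathbf{H}^0)$ contains $G$, whence $\mathbf{H}^0\trianglelefteq\mathbf{G}$. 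Now pass to $\overline{\mathbf{G}} := \mathbf{G}/\mathbf{H}^0$ with projection $\pi$, and put $\bar\alpha = \pi(A)$, $\bar\beta = \pi(B)$. As $\mathbf{H} = \overline{\langle B\rangle}$ has finitely many components, $\pi(\mathbf{H}) = \mathbf{H}/\mathbf{H}^0$ is a finite group, and density of $\langle B\rangle$ in $\mathbf{H}$ forces $\pi(\mathbf{H}) = \langle\bar\beta\rangle$; let $d$ be its order. Likewise $\pi(\phi(\mathbf{H})) = \pi\bigl(\overline{\langle ABA^{-1}\rangle}\bigr) = \langle\bar\alpha\bar\beta\bar\alpha^{-1}\rangle$. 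Since $\mathbf{H}^0\subseteq\mathbf{H}\cap\phi(\mathbf{H})$, each of $\mathbf{H}$ and $\phi(\mathbf{H})$ is the $\pi$-preimage of its image, so it suffices to prove that $\bar\alpha\bar\beta\bar\alpha^{-1}$ generates the same cyclic subgroup of $\overline{\mathbf{G}}$ as $\bar\beta$.

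The arithmetic heart of the argument — and the step I expect to be the main obstacle — is the claim that $d$ is coprime to $p$ and to $q$; this is exactly where the hypothesis $\gcd(p,q)=1$ enters. The relation gives $\bar\beta^p = \bar\alpha^{-1}\bar\beta^q\bar\alpha$, so $\bar\beta^p$ and $\bar\beta^q$ are conjugate in $\overline{\mathbf{G}}$ and therefore have equal order in the finite group $\langle\bar\beta\rangle$; that is, $d/\gcd(d,p) = d/\gcd(d,q)$, hence $\gcd(d,p)=\gcd(d,q)$. This common value divides both $p$ and $q$, so it equals $1$. Granting this, set $\bar C = \bar\alpha\bar\beta\bar\alpha^{-1}$; then $\bar C^{\,p} = \bar\alpha\bar\beta^p\bar\alpha^{-1} = \pi(AB^pA^{-1}) = \pi(B^q) = \bar\beta^q$, and since $\gcd(d,q)=1$ the element $\bar\beta^q$ generates $\langle\bar\beta\rangle$, so $\langle\bar\beta\rangle = \langle\bar C^{\,p}\rangle\subseteq\langle\bar C\rangle$. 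But $\bar C$ is conjugate to $\bar\beta$, hence $\langle\bar C\rangle$ also has order $d$, forcing $\langle\bar C\rangle = \langle\bar\beta\rangle$. Thus $\pi(\phi(\mathbf{H})) = \pi(\mathbf{H})$, so $\phi(\mathbf{H}) = \pi^{-1}(\pi(\mathbf{H})) = \mathbf{H}$, and the reduction of the first paragraph finishes the proof.
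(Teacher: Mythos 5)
Your proof is correct and follows essentially the same route as the paper's: reduce to showing $A$ normalizes $\mathbf{H}$, use Fact~\ref{lemmaFI} to see that conjugation by $A$ fixes $\mathbf{H}^0$, and then exploit $\gcd(p,q)=1$ in the finite cyclic component group $\mathbf{H}/\mathbf{H}^0$. The only cosmetic difference is the finish: you show $\gcd(d,p)=\gcd(d,q)=1$ and conclude that $\bar\alpha\bar\beta\bar\alpha^{-1}$ generates $\langle\bar\beta\rangle$, whereas the paper phrases the same coprimality computation as an index argument showing $\overline{\langle B^p\rangle}=\overline{\langle B^q\rangle}=\mathbf{H}$.
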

\begin{proof}
It suffices to check that $\bf H$ is normalized by $A$, as $B \in \bf H$ and $A$ and $B$ together generate $\bf G$ as an algebraic group.  By the Baumslag-Solitar relation, 
\begin{equation}
A B^p A^{-1} = B^q.
\end{equation}
In turn, we see that
\begin{align}
A \langle B^p \rangle A^{-1} &= \langle B^q \rangle, \\
A \overline{\langle B^p \rangle} A^{-1} &= \overline{\langle B^q \rangle},\\
A \overline{\langle B^p \rangle}^0 A^{-1} &= \overline{\langle B^q \rangle}^0.
\end{align}
Applying Fact~\ref{lemmaFI}(iii) to $\langle B^k \rangle \leq \langle B \rangle \leq {\bf G}$, we have $\overline{\langle B^k \rangle}^0={\bf H}^0$ for all $k$, so $A {\bf H}^0 A^{-1} = {\bf H}^0$.  Conjugation by $A$ therefore induces an automorphism of the finite group ${\bf H}/{\bf H}^0$ which takes $\overline{\langle B^p \rangle}  / {\bf H}^0$ to $\overline{\langle B^q \rangle}  / {\bf H}^0$; in particular, these two subgroups have the same order, and consequently the same index.   By Fact~\ref{lemmaFI}(ii), the former has index dividing $p$, whereas the latter has index dividing $q$.  As $p$ and $q$ are relatively prime, both $\overline{\langle B^p \rangle}  / {\bf H}^0$ and $\overline{\langle B^q \rangle}  / {\bf H}^0$ must have index one in ${\bf H}/{\bf H}^0$, so $\overline{\langle B^q \rangle} = \overline{\langle B^p \rangle} = {\bf H}$.  Now 
\begin{equation}
 A {\bf H} A^{-1} = A \overline{\langle B^p \rangle} A^{-1} = \overline{\langle B^q \rangle} = \bf H,
\end{equation}
so $A$ normalizes $\bf H$ as desired.
\end{proof}

\begin{samepage}
\begin{theorem}
\label{thmHFinite}
If $\rho$ is irreducible, then
\begin{enumerate}
  \item $\bf H$ is diagonalizable,
  \item $\bf H$ is a finite cyclic group generated by $B$, and
  \item the order $\ell$ of $B$ divides $p^{\varphi(\ell)} - q^{\varphi(\ell)}$; in particular, $(\ell, p) = (\ell, q) = 1$.
\end{enumerate}
\end{theorem}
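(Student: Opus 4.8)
The plan is to derive all three parts from two facts already available: that $\mathbf{H}$ is normal in $\mathbf{G}$ (Theorem~\ref{thmhnormg}), and that $V$ is irreducible as a $\mathbf{G}$-module — equivalently, as a $G$-module, since $G$ is Zariski-dense in $\mathbf{G}$ and the stabilizer of a subspace is closed. For (i): since $\mathbf{H} = \overline{\langle B\rangle}$ is commutative, its unipotent elements form a closed subgroup $\mathbf{H}_u$, and conjugation by elements of $\mathbf{G}$ preserves $\mathbf{H}$ (normality) and preserves unipotency, so $\mathbf{H}_u \trianglelefteq \mathbf{G}$. By Kolchin's fixed-point theorem the space $V^{\mathbf{H}_u}$ is nonzero, and it is $\mathbf{G}$-stable because $\mathbf{H}_u$ is normal; irreducibility then forces $V^{\mathbf{H}_u} = V$, i.e.\ $\mathbf{H}_u = \{1\}$. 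Hence every element of $\mathbf{H}$ coincides with its semisimple part, and a commuting family of diagonalizable matrices is simultaneously diagonalizable, so $\mathbf{H}$ is diagonalizable; in particular $B$ is semisimple.

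For (ii), diagonalize $\mathbf{H}$ and collect the basis vectors into $\mathbf{H}$-weight spaces $V = \bigoplus_{\chi \in X} V_\chi$, where $X$ is the finite set of weights that occur. Since $G$ normalizes $\mathbf{H}$, conjugation makes $G$ permute the $V_\chi$ and hence act on $X$; and $B$, centralizing $\mathbf{H}$, preserves each $V_\chi$, so it acts trivially on $X$. Thus the action of $G = \langle A, B\rangle$ on $X$ is cyclic, generated by the permutation induced by $A$, and irreducibility forces $X$ to be a single orbit — otherwise the span of the $V_\chi$ over one orbit would be a proper nonzero $G$-submodule. Consequently $A$ acts on $X$ as an $m$-cycle with $m = |X|$, so $A^m$ preserves every $V_{\chi_i}$. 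Writing $\lambda_i$ for the scalar by which $B$ acts on $V_{\chi_i}$ and invoking Proposition~\ref{propbpkbqk} in the form $A^m B^{p^m} A^{-m} = B^{q^m}$, comparison of the scalars by which the two sides act on $V_{\chi_i}$ gives $\lambda_i^{\,p^m} = \lambda_i^{\,q^m}$. Since $p^m \neq q^m$ (equality would force $|p| = |q| = 1$, which is excluded), each $\lambda_i$ is a $(p^m - q^m)$-th root of unity; a semisimple matrix whose eigenvalues are roots of unity has finite order, so $B^{\,\ell} = 1$ for some $\ell$ dividing $p^m - q^m$. Then $H = \langle B\rangle$ is finite, hence Zariski-closed, so $\mathbf{H} = H$ is finite cyclic, generated by $B$.

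For (iii), observe that from $\ell \mid p^m - q^m$ together with $\gcd(p,q) = 1$ one deduces $\gcd(\ell, p) = \gcd(\ell, q) = 1$: a prime dividing both $\ell$ and $p$ would divide $q^m$, hence $q$, a contradiction. Euler's theorem then gives $p^{\varphi(\ell)} \equiv 1 \equiv q^{\varphi(\ell)} \pmod{\ell}$, whence $\ell \mid p^{\varphi(\ell)} - q^{\varphi(\ell)}$.

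The step I expect to be the main obstacle is the weight-space analysis inside (ii): making precise that $G$ permutes the $\mathbf{H}$-weight spaces, that the resulting permutation action is cyclic and (by irreducibility) transitive, and then extracting the eigenvalue identity $\lambda_i^{p^m} = \lambda_i^{q^m}$ without index errors. The unipotent argument for (i) and the number-theoretic conclusion (iii) are routine once that framework is in place.
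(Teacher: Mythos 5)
Your proof is correct, and parts (i) and (iii) land where the paper does: (i) is essentially identical (normality of $\mathbf{H}_u$ plus the unipotent fixed-point theorem), while for (iii) your route via Euler's theorem (coprimality of $\ell$ with $p$ and $q$ extracted directly from $\ell \mid p^m - q^m$, then $p^{\varphi(\ell)} \equiv 1 \equiv q^{\varphi(\ell)} \pmod{\ell}$) is arguably cleaner than the paper's, which instead bounds the order $r$ of the conjugation automorphism by $\lvert\Aut \mathbf{H}\rvert = \varphi(\ell)$ and reruns the argument from (ii). The genuine divergence is in (ii). The paper gets finiteness of the order of conjugation by $A$ in one line from the rigidity of diagonalizable groups ($N_{\mathbf{G}}(\mathbf{H})/Z_{\mathbf{G}}(\mathbf{H})$ is finite), then applies Proposition~\ref{propbpkbqk} with that order $r$. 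You instead decompose $V$ into $\mathbf{H}$-weight spaces, observe that $A$ permutes them and $B$ fixes them, use irreducibility to get transitivity, and conclude that $A^m$ ($m$ the number of weight spaces) stabilizes each one before applying Proposition~\ref{propbpkbqk} with exponent $m$. Both are valid; the step you flagged as the main obstacle goes through without trouble. Your version trades the rigidity theorem for an explicit combinatorial argument that is, in effect, a preview of the paper's own Proposition~\ref{propEigenB} and its corollary (where the single-orbit structure and the stabilization by $A^{n+1}$ are established anyway), so it is more self-contained at the cost of front-loading work the paper defers; the paper's version keeps Theorem~\ref{thmHFinite} short but leans on one more piece of algebraic-group machinery.
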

\end{samepage}

\begin{proof}
(i) Since $\langle B \rangle$ is commutative, so is its closure $\bf H$.  Recall that a commutative linear algebraic group is the direct sum of its unipotent and semisimple parts, each of which is moreover a characteristic subgroup.  Therefore ${\bf H}_u \trianglelefteq \bf G$, so we may consider the $\bf G$-submodule $V^{{\bf H}_u}$ of points fixed by ${\bf H}_u$.  Since ${\bf H}_u$ is unipotent, $V^{{\bf H}_u}$ is nonzero.  But $V$ was simple by assumption, so $V^{{\bf H}_u} = V$, which can only be the case if ${\bf H}_u = 1$.  We must then have ${\bf H} = {\bf H}_s$, so $\bf H$ is a commutative group of diagonalizable matrices, and therefore diagonalizable. (ii) By the rigidity of diagonalizable subgroups, $N_{\bf G}({\bf H}) / Z_{\bf G}({\bf H})$ is finite, so conjugation by $A$ gives a finite-order automorphism of $\bf H$.   Let $r$ denote this order. By Proposition~\ref{propbpkbqk} we have $B^{p^r} = A^{-r} B^{q^r} A^r$, so $B^{p^r} = B^{q^r}$, i.e.\ $B^{p^r - q^r} = 1$.  Therefore $\langle B \rangle$ is finite.  Consequently, $\langle B \rangle$ is already closed, so\ ${\bf H} = \langle B \rangle$. (iii) If $\ell = 1$ this is immediate, so suppose $\ell > 1$.  As $\bf H$ is cyclic of order $\ell$, $\lvert\Aut {\bf H}\rvert = \varphi(\ell)$, so $r | \varphi(\ell)$.  By the argument from (ii),  $B^{p^{\varphi(\ell)} - q^{\varphi(\ell)}} = 1$, so $\left.\ell \;\middle|\;  p^{\varphi(\ell)} - q^{\varphi(\ell)}\right.$.  The rest is elementary number theory.
\end{proof}

To summarize, we've now shown that every irreducible representation of $\Gamma$ factors through a metacyclic group, which is moreover cylic-by-\emph{finite}-cyclic.  The following calculation will be useful in the next section:

\begin{proposition}
\label{propBtothes}
In $G$, we have the identities
\begin{enumerate}
\item  $A^{-1} B A = B^s$, and 
\item $A^{-i} B^j A^i = B^{js^i}$ in general,
\end{enumerate}
where $\ell = |B|$ and $p \equiv qs \pmod{\ell}$.
\end{proposition}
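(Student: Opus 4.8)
The plan is to first pin down the integer $s$ from the arithmetic already in hand, then read off (i) directly from the Baumslag--Solitar relation, and finally obtain (ii) by a one-line induction.

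First I would note that $s$ is well-defined modulo $\ell$: by Theorem~\ref{thmHFinite}(iii) we have $(\ell, q) = 1$, so $q$ is a unit modulo $\ell$ and the congruence $p \equiv qs \pmod{\ell}$ has the unique solution $s \equiv pq^{-1} \pmod{\ell}$; since $(\ell, p) = 1$ as well, also $(\ell, s) = 1$. (When $\ell = 1$, $B$ is the identity and every assertion is vacuous, so assume $\ell > 1$.) As ${\bf H} = \langle B \rangle$ is cyclic of order $\ell$ by Theorem~\ref{thmHFinite}(ii), $B^s$ depends only on the residue of $s$ mod $\ell$, so (i) is unambiguous.

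For (i): by Theorem~\ref{thmhnormg}, $A$ normalizes ${\bf H} = \langle B \rangle$, so conjugation by $A^{-1}$ (that is, $x \mapsto A^{-1} x A$) restricts to an automorphism of the cyclic group ${\bf H}$; hence $A^{-1} B A = B^t$ for some $t$ coprime to $\ell$. Raising to the $q$-th power gives $A^{-1} B^q A = B^{qt}$. On the other hand, the Baumslag--Solitar relation $A B^p A^{-1} = B^q$ rearranges to $A^{-1} B^q A = B^p$, so $B^{qt} = B^p$, i.e.\ $qt \equiv p \equiv qs \pmod{\ell}$. Cancelling the unit $q$ yields $t \equiv s \pmod{\ell}$, and therefore $A^{-1} B A = B^t = B^s$.

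For (ii): induct on $i \geq 0$. The case $i = 0$ is trivial, and if the claim holds for $i$ then
\[
A^{-(i+1)} B^j A^{i+1} = A^{-1}\bigl(A^{-i} B^j A^{i}\bigr) A = A^{-1} B^{j s^i} A = \bigl(A^{-1} B A\bigr)^{j s^i} = \bigl(B^s\bigr)^{j s^i} = B^{j s^{i+1}},
\]
using (i) in the penultimate step. For negative $i$ one runs the same computation with $A^{-1} B A = B^s$ replaced by $A B A^{-1} = B^{s'}$, where $s'$ is the inverse of $s$ modulo $\ell$ (which exists since $(\ell, s) = 1$); equivalently, $i \mapsto (x \mapsto A^{-i} x A^{i})$ is a homomorphism $\mathbb{Z} \to \Aut {\bf H}$ sending $1$ to the $s$-power map. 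There is essentially no obstacle here: the only care needed is that all exponents live modulo $\ell$, which is exactly why the coprimality statements of Theorem~\ref{thmHFinite}(iii) are invoked; the rest is formal manipulation inside the finite cyclic group ${\bf H}$.
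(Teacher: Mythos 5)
Your proof is correct and follows essentially the same route as the paper: use the normality of ${\bf H} = \langle B \rangle$ to write $A^{-1}BA$ as a power of $B$, then pin down the exponent via the Baumslag--Solitar relation and the invertibility of $q$ modulo $\ell$, with (ii) following by iteration. You are slightly more explicit than the paper (which defines $s$ by the conjugation relation and then derives the congruence, whereas you go the other way and verify the two agree), but the substance is identical.
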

\begin{proof}
(i) Since $H \trianglelefteq G$, $A^{-1}BA \in H$; as $H = \langle B \rangle$, this means $A^{-1} BA = B^s$ for some s.  
Now
\begin{equation}
B^p = A^{-1} B^q A = (A B A^{-1})^q = (B^s)^q = B^{qs};
\end{equation}
as $\ell = |B|$ by definition, we have $p \equiv qs \pmod{\ell}$.  (ii) is immediate from (i).
\end{proof}

\section{A Basis for $V$}

From now on, assume $\rho$ is irreducible, so that $V$ is a simple $\Gamma$-module and the results of Theorem~\ref{thmHFinite} will apply.  (We will not need to make use of algebraic groups any further.)   As in the previous section, let $\ell$ denote the order of $B$ and $s$ be the unique solution to $p \equiv q s \pmod{\ell}$.  The next step is to get a canonical basis for $V$ of eigenvectors of $B$ which behaves nicely under multiplication by $A$.

\begin{proposition}
\label{propEigenB}
Let $v$ be a nonzero $\lambda$-eigenvalue of $B$ for some $\lambda$.
\begin{enumerate}
  \item $A^mv$ is a $\lambda^{s^m}$-eigenvector of $B$.
  \item $\{v, Av, A^2v, \ldots, A^nv\}$ is a basis of $B$-eigenvectors.
  \item The eigenvalues of $B$ are distinct primitive $\ell$-th roots of unity.
\end{enumerate}
\end{proposition}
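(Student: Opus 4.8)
The plan is to prove (i)--(iii) in turn, each built on the previous, using only what is already available: that $\mathbf H=\langle B\rangle$ is finite cyclic of order $\ell$ with $B$ diagonalizable over $\mathbb{C}$ (Theorem~\ref{thmHFinite}), the commutation rule $A^{-1}BA=B^{s}$ with $p\equiv qs\pmod{\ell}$ (Proposition~\ref{propBtothes}), and the simplicity of $V$. Part (i) is a one-line computation: writing Proposition~\ref{propBtothes}(i) as $BA=AB^{s}$ gives $B(Av)=AB^{s}v=\lambda^{s}(Av)$, and the general case is that computation iterated, i.e.\ Proposition~\ref{propBtothes}(ii) applied to the eigenvector $v$. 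For (ii), consider the span $W$ of the forward orbit $\{A^{k}v : k\ge 0\}$: it is visibly $A$-stable, it is $B$-stable because by (i) it is spanned by $B$-eigenvectors, and since $A$ and $B$ generate $G$ it is therefore a nonzero $\Gamma$-submodule, so $W=V$ by simplicity. A standard argument---once $A^{m}v$ lies in the span of $v,\dots,A^{m-1}v$, so does every later $A^{k}v$---then shows the first $n$ vectors of the orbit already span $V$ and hence form a basis of $B$-eigenvectors.

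The substance is in (iii). Since the $A^{k}v$ span $V=\bigoplus_{\mu}V_{\mu}$ and $A^{k}v\in V_{\lambda^{s^{k}}}$, every eigenvalue of $B$ equals some power $\lambda^{s^{k}}$ of $\lambda$; hence if $\lambda$ has order $d_{0}$ then $B^{d_{0}}=I$, which forces $\ell\mid d_{0}$ and so $d_{0}=\ell$, and by the same reasoning every eigenvalue is a primitive $\ell$-th root of unity. Now let $d$ be the number of distinct eigenvalues: these are exactly the distinct terms $\lambda,\lambda^{s},\dots,\lambda^{s^{d-1}}$ of the periodic sequence $(\lambda^{s^{k}})_{k\ge 0}$; since $A$ carries $V_{\mu}$ into $V_{\mu^{s}}$ it cycles these $d$ eigenspaces, they have a common dimension $n/d$, and $V=\bigoplus_{k=0}^{d-1}V_{\lambda^{s^{k}}}$. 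Pick an eigenvector $w$ of the operator $A^{d}$ restricted to $V_{\lambda}$, so the line $\mathbb{C}w$ is $A^{d}$-stable; then $\bigoplus_{k=0}^{d-1}A^{k}(\mathbb{C}w)$ is $B$-stable (each summand lies in a single eigenspace) and is taken into itself by both $A$ and $A^{-1}$ (they merely cycle the summands, the wrap-around $A(A^{d-1}(\mathbb{C}w))=A^{d}(\mathbb{C}w)=\mathbb{C}w$ using $A^{d}$-stability), so it is a nonzero $\Gamma$-submodule of dimension exactly $d$, hence equal to $V$. Therefore $n=d$: there are $n$ distinct eigenvalues, every eigenspace is a line, and the eigenvalues of $B$ are distinct. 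The arithmetic assertions $(\ell,p)=(\ell,q)=1$ require nothing new, being part of Theorem~\ref{thmHFinite}(iii).

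I expect the only real obstacle to be the distinctness clause in (iii); parts (i) and (ii) are essentially formal. The crux is that, over $\mathbb{C}$, the operator $A^{d}$ restricted to $V_{\lambda}$ necessarily has an eigenvector, and spreading the line it spans around the $A$-orbit produces a genuine $\Gamma$-submodule whose dimension is exactly the number $d$ of distinct eigenvalues---forcing $d=n$, equivalently ruling out any repeated eigenvalue. The delicate point is the submodule bookkeeping: verifying that this spread-out subspace is stable under $A$ and $A^{-1}$ and is exactly $d$-dimensional, which is precisely where the $A^{d}$-stability of the chosen line is used.
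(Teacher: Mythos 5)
Your proof is correct. Parts (i), (ii), and the first half of (iii) (that every eigenvalue is a primitive $\ell$-th root of unity) follow the paper essentially verbatim: the same computation from $A^{-1}BA=B^{s}$, the same orbit-span argument invoking simplicity, and the same observation that $\lambda$ of order $\ell'<\ell$ would force $B^{\ell'}=I$. Where you genuinely diverge is the distinctness claim, and your route is the stronger one. The paper disposes of distinctness in one line: from $\lambda^{s^{m}}=\lambda^{s^{m'}}$ it deduces ``$\lambda^{s^{m'-m}}=1$,'' contradicting primitivity. But $\lambda^{s^{m}}=\lambda^{s^{m'}}$ actually gives $s^{m'-m}\equiv 1\pmod{\ell}$, i.e.\ $\lambda^{s^{m'-m}}=\lambda$, which contradicts nothing; and distinctness cannot follow from primitivity alone, since in the paper's own reducible example ($\ell=3$, $s=1$) all eigenvalues equal the same primitive cube root of unity. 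Irreducibility must be invoked a second time, and that is exactly what your argument supplies: letting $d$ be the number of distinct eigenvalues (the period of the cycle $\mu\mapsto\mu^{s}$), you choose an eigenline of $A^{d}$ inside $V_{\lambda}$ and spread it around the $A$-orbit of eigenspaces to manufacture a $d$-dimensional $\Gamma$-submodule, forcing $d=\dim V$ and hence distinctness. This is essentially the Clifford-theoretic argument the paper gestures at but does not carry out, and it repairs a genuine gap in the paper's proof. Two bookkeeping points only: with the paper's conventions $\dim V=n+1$, so your conclusion should read $d=n+1$ rather than $d=n$ (and the common eigenspace dimension is $(n+1)/d$), and in (ii) the spanning orbit has $n+1$ vectors, not $n$. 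Neither affects the substance.
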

\begin{proof}
(i) By Proposition~\ref{propBtothes}(ii), $B(A^m v) = A^m (B^{s^m} v) = \lambda^{s^m} (A^m v)$. (ii) Let $N$ be the smallest positive integer such that $A^{N+1}v \in \laspan \{v, Av, \ldots A^N v\}$.  Then $W = \laspan \{v, Av, \ldots A^N v\}$ is a nonempty subspace of $V$ invariant under $a$ and $b$, and consequently under all of $\Gamma$.  Since $V$ is a simple $\Gamma$-module, we must have $V = W$.  This set of $N+1$ vectors is linearly independent by construction, so it's a basis for $V$.  Equating dimensions, $N + 1 = \dim V = n + 1$.  (iii) We've now shown each eigenvalue of $B$ is a power of $\lambda$.  If $\lambda$ had order $\ell' < \ell$, then each power of $\lambda$ would have order dividing $\ell'$.  As $B$ is diagonalizable, this would mean $B^{\ell'} = I$.  So $\lambda$ is a primitive $\ell$-th root of unity.  But $\lambda$ was an arbitrary eigenvalue of $B$, so each eigenvalue of $B$ is a primitive $\ell$-root of unity. Finally, if $\lambda^{s^m} = \lambda^{s^{m'}}$ for some $0 \leq m < m' \leq n$, then 
$\lambda^{s^{m' - m}} = 1$, contradicting the previous sentence.
\end{proof}

\begin{samepage}
\begin{corollary}
$A^{n+1}$ stabilizes each eigenspace of $B$.
\end{corollary}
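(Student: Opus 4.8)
The plan is to run everything through the eigenbasis produced by Proposition~\ref{propEigenB}. By parts (ii) and (iii) of that proposition, $\{v, Av, \ldots, A^n v\}$ is a basis of $V$ whose members are $B$-eigenvectors with \emph{pairwise distinct} eigenvalues $\lambda^{s^0}, \lambda^{s^1}, \ldots, \lambda^{s^n}$. Hence $B$ has exactly $n+1$ eigenvalues, each eigenspace is one-dimensional, and the $\lambda^{s^m}$-eigenspace is precisely $\laspan\{A^m v\}$. Since $A^{n+1}(A^m v) = A^m(A^{n+1} v)$ and $A^m$ is invertible, it suffices to show that $A^{n+1}$ stabilizes the one eigenspace $\laspan\{v\}$; applying $A^m$ then transports this to the $\lambda^{s^m}$-eigenspace for every $m$.

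So I would first observe, via Proposition~\ref{propEigenB}(i) applied with $m = n+1$ (its proof is valid for every nonnegative exponent), that $A^{n+1} v$ is a nonzero $\lambda^{s^{n+1}}$-eigenvector of $B$; in particular $\lambda^{s^{n+1}}$ is one of the eigenvalues above, say $\lambda^{s^{n+1}} = \lambda^{s^j}$ with $0 \le j \le n$. The crux is to rule out $j \ge 1$. Since $\gcd(p,\ell) = \gcd(q,\ell) = 1$ by Theorem~\ref{thmHFinite}(iii) and $p \equiv qs \pmod{\ell}$, the residue $s$ is a unit modulo $\ell$; because $\lambda$ is a primitive $\ell$-th root of unity, the relation $\lambda^{s^{n+1}} = \lambda^{s^j}$ is equivalent to $s^{n+1} \equiv s^j \pmod{\ell}$, and cancelling the unit $s^j$ gives $s^{n+1-j} \equiv 1 \pmod{\ell}$, i.e.\ $\lambda^{s^{n+1-j}} = \lambda^{s^0}$. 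If $1 \le j \le n$ then $1 \le n+1-j \le n$, contradicting the distinctness in Proposition~\ref{propEigenB}(iii). Hence $j = 0$, so $A^{n+1} v$ is a $\lambda$-eigenvector and thus lies in $\laspan\{v\}$.

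Combining the two observations finishes it: writing $A^{n+1} v = c v$, one gets $A^{n+1}(A^m v) = A^m(c v) = c\,A^m v$ for every $m$, so $A^{n+1}$ fixes each eigenspace of $B$ (indeed $A^{n+1} = cI$). The only genuinely delicate point is the middle step — pinning the eigenvalue of $A^{n+1} v$ down to $\lambda$ itself rather than to some unspecified eigenvalue of $B$ — which is exactly where the coprimality of $s$ with $\ell$ and the distinctness of the $\lambda^{s^m}$ are both needed. An alternative packaging of the same idea: $A$ induces a permutation of the $n+1$ one-dimensional eigenspaces of $B$ carrying $\laspan\{A^m v\}$ to $\laspan\{A^{m+1}v\}$, which is therefore the $(n+1)$-cycle through all of them, so its $(n+1)$-st power is the identity on the set of eigenspaces.
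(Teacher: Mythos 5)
Your proof is correct, but the decisive step is different from the paper's. Both arguments share the same setup: the eigenbasis $\{v, Av, \ldots, A^n v\}$ from Proposition~\ref{propEigenB}(ii)--(iii) makes every eigenspace of $B$ one-dimensional, and the problem reduces to showing that the eigenvalue $\lambda^{s^{n+1}}$ of $A^{n+1}v$ equals $\lambda$. The paper pins this down with a linear-algebra count: if $A^{n+1}v$ had a non-$\lambda$ eigenvalue it would lie in $\laspan\{Av,\ldots,A^nv\}$, forcing $\im A$ into a proper subspace and contradicting invertibility of $A$. You instead argue arithmetically: since $(\ell,p)=(\ell,q)=1$ makes $s$ a unit modulo $\ell$, the congruence $s^{n+1}\equiv s^j \pmod{\ell}$ cancels to $s^{n+1-j}\equiv 1$, which collides with the distinctness of $\lambda^{s^0},\ldots,\lambda^{s^n}$ unless $j=0$. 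Your route leans on Theorem~\ref{thmHFinite}(iii) and the modular structure of $s$ rather than on the invertibility of $A$, and it yields the slightly stronger byproduct $A^{n+1}=cI$ explicitly (which the paper only extracts later via the canonical form); the paper's route is softer and uses no number theory. Your closing ``alternative packaging'' --- $A$ permutes the $n+1$ one-dimensional eigenspaces in a single cycle, so its $(n+1)$-st power fixes each --- is essentially the paper's argument restated, since forcing the permutation to close up into an $(n+1)$-cycle is exactly where invertibility of $A$ enters. All steps check out.
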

\begin{proof}
Let $v$ be a $\lambda$-eigenvector of $B$.   By the proposition $A^{n+1} v$ is a $\mu$-eigenvector of $B$ for some $\mu$.  Consider the action of $A$ on $V$.  Each of $Av, A(Av), \ldots A(A^{n-1})v$ is an eigenvector of $B$ corresponding to some non-$\lambda$ eigenvalue.  If $\mu \neq \lambda$, then we would have $\im A \subseteq \laspan \{Av, A^2v, \ldots A^n v\}$, which is a proper subspace of $V$.  Since $A$ is invertible, this can't be the case, so $A^{n+1} v$ must be a $\lambda$-eigenvector of $B$.
\end{proof}
\end{samepage}

Interpreting these as statements about the matrices of $A$ and $B$ in a particular basis, we've essentially proven the following result:

\begin{proposition}
\label{propCanonicalFormAB}
There is a basis for $V$ in which
\begin{equation}
A = c \left[ \begin{array}{cccc:c}
0 &            &             &           &  \,1 \\
1 & \ddots &            &            &  \,0 \\
   & \ddots & \ddots &            &  \vdots \\
  &              & \ddots & 0 &  \vdots \\
  &             &             &   1        & \,0
\end{array} \right], \quad B = \left[ \begin{array}{ccccc}
\lambda \\
& \lambda^s \\
 & & \ddots \\
 & & & \ddots \\ 
 & & & & \lambda^{s^n} \end{array}\right]
\label{eqCanonicalFormAB}
\end{equation}
for some nonzero complex number $c$ and some root $\lambda$ of unity.
\end{proposition}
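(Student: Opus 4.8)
The plan is to take the basis furnished by Proposition~\ref{propEigenB}, read off the two matrices, and then apply one cosmetic rescaling. Concretely, I would fix a nonzero $\lambda$-eigenvector $v$ of $B$, where by Proposition~\ref{propEigenB}(iii) the scalar $\lambda$ is a primitive $\ell$-th root of unity, and set $\mathcal{B}_0 = \{v, Av, A^2v, \ldots, A^nv\}$. By Proposition~\ref{propEigenB}(ii) this is a basis of $V$, and by part~(i) the vector $A^iv$ is a $\lambda^{s^i}$-eigenvector of $B$; hence in the basis $\mathcal{B}_0$ the matrix of $B$ is already exactly the diagonal matrix displayed in~\eqref{eqCanonicalFormAB}.

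Next I would determine the matrix of $A$ in $\mathcal{B}_0$. For $0 \leq i < n$ one has $A \cdot A^i v = A^{i+1}v$, i.e.\ $A$ sends the $i$-th basis vector to the next one, which supplies the subdiagonal. For the last basis vector, $A \cdot A^n v = A^{n+1}v$; by the preceding corollary this vector lies in the $\lambda$-eigenspace of $B$, and since $B$ has the $n+1$ pairwise distinct eigenvalues $\lambda^{s^0}, \ldots, \lambda^{s^n}$ on the $(n+1)$-dimensional space $V$, that eigenspace is one-dimensional, namely $\mathbb{C}v$. Therefore $A^{n+1}v = c_0 v$ for some scalar $c_0$, which is nonzero because $A$ is invertible. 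Thus in $\mathcal{B}_0$ the matrix of $A$ is the companion-type matrix with $1$'s along the subdiagonal and $c_0$ in the upper-right corner.

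To turn this into the stated form $c\,P$ with $P$ the cyclic shift, I would choose any $(n+1)$-st root $c$ of $c_0$ (possible over $\mathbb{C}$, and nonzero since $c_0 \neq 0$) and replace each $A^iv$ by $w_i := c^{-i}A^iv$. A one-line check gives $Aw_i = c\,w_{i+1}$ for $0 \leq i < n$ and $Aw_n = c^{-n}A^{n+1}v = c^{-n}c_0 v = c\,w_0$, so in the basis $\{w_0, \ldots, w_n\}$ the matrix of $A$ is $c\,P$; and since each $w_i$ is still an eigenvector of $B$ with the same eigenvalue $\lambda^{s^i}$, the matrix of $B$ is unchanged. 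Renaming, this is precisely~\eqref{eqCanonicalFormAB}.

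I do not anticipate a real obstacle: essentially all of the work is already contained in Proposition~\ref{propEigenB} and its corollary. The only point that requires any care is the final rescaling — one must verify that dividing the basis vectors by successive powers of $c$ spreads the single entry $c_0$ uniformly across the shift matrix while leaving the diagonal form of $B$ intact — and that the eigenvalues $\lambda^{s^i}$ are distinct (which is exactly Proposition~\ref{propEigenB}(iii)) so that the $\lambda$-eigenspace really is just $\mathbb{C}v$.
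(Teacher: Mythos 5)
Your proposal is correct and is exactly the argument the paper intends: its entire proof is the one line ``Rescale the basis from Proposition~\ref{propEigenB}(ii),'' and you have simply carried out that rescaling in detail, correctly using the corollary and the distinctness of the eigenvalues to get $A^{n+1}v = c_0 v$ and then spreading $c_0$ across the shift matrix via $w_i = c^{-i}A^i v$ with $c^{n+1} = c_0$.
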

\begin{proof}
Rescale the basis from Proposition~\ref{propEigenB}(ii).
\end{proof}

We can cast these results in more representation-theoretic terms.  Suppose we have a representation of the form (\ref{eqCanonicalFormAB}) with $c = 1$.  Let $\eta = \langle a^{n+1}, b \rangle \leq \Gamma$.  Then we have $\rho=\ind^\Gamma_\eta \chi$, where $\chi : \eta \rightarrow \mathbb{C}^\times$ is the character of $\eta$ sending $b$ to $\lambda$ and $a$ to 1.  In principle, we could now complete the classification of simple $\Gamma$-modules by applying the Mackey irreducibility criterion to see which of these characters induce irreducible representations of $\Gamma$.  As it turns out, though, the argument from first principles in the next section is simpler.

\section{Classification of simple $\Gamma$-modules}

In Proposition~\ref{propCanonicalFormAB}, we showed that each $n+1$-dimensional simple representation of $\Gamma$  is conjugate to one of the form (\ref{eqCanonicalFormAB}).  In this section, we completely characterize such representations, thereby giving a complete description of the irreducible representations of $\Gamma$.

\begin{theorem}
\label{thmRepExistConditions}
Let $c$ be a nonzero complex number, $\ell$ a positive integer, $\lambda$ a primitive $\ell$-th root of unity, and $p \equiv q s\pmod{\ell}$.  There is an $(n+1)$-dimensional representation of $\Gamma$ sending $a$ to $A$ and $b$ to $B$, where $A$ and $B$ are as in (\ref{eqCanonicalFormAB}), if and only if $\ell$ divides $q^{n+1} - p^{n+1}$.
\end{theorem}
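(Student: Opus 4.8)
The plan is to reduce the statement to a single matrix identity and then to an elementary congruence. Since $\Gamma$ is a one-relator group and $A,B\in\GL_{n+1}(\mathbb{C})$ are invertible, the assignment $a\mapsto A$, $b\mapsto B$ extends to a homomorphism $\Gamma\to\GL_{n+1}(\mathbb{C})$ if and only if $A$ and $B$ satisfy the Baumslag--Solitar relation, which I write as $AB^pA^{-1}=B^q$. So it suffices to decide exactly when this identity holds for the matrices of (\ref{eqCanonicalFormAB}).

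First I would compute both sides. Since $B=\operatorname{diag}(\lambda,\lambda^s,\dots,\lambda^{s^n})$, we have $B^q=\operatorname{diag}(\lambda^q,\lambda^{sq},\dots,\lambda^{s^nq})$, and $B^p$ is the same with $q$ replaced by $p$. Conjugation by the scaled cyclic shift $A$ cyclically permutes the diagonal entries (the scalar $c$ cancels, as the statement predicts), so $AB^pA^{-1}=\operatorname{diag}(\lambda^{s^np},\lambda^p,\lambda^{sp},\dots,\lambda^{s^{n-1}p})$. Comparing the two diagonal matrices entrywise and using that $\lambda$ is a primitive $\ell$-th root of unity, the identity $AB^pA^{-1}=B^q$ amounts to the system of congruences $s^np\equiv q$ (from the first entry) together with $s^{k-1}p\equiv s^kq\pmod\ell$ for $k=1,\dots,n$ (from the remaining entries). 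Because $p\equiv qs\pmod\ell$ by hypothesis, every congruence in the second group holds automatically, so the representation exists if and only if $s^np\equiv q\pmod\ell$.

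It then remains to check that $s^np\equiv q\pmod\ell$ is equivalent to $\ell\mid q^{n+1}-p^{n+1}$. The crucial preliminary remark is that the hypothesis $p\equiv qs\pmod\ell$, combined with $(p,q)=1$, already forces $(q,\ell)=1$: a common prime divisor of $q$ and $\ell$ would divide $p-qs$ and hence $p$. With $q$ a unit modulo $\ell$, multiplying $s^np\equiv q$ by $q^n$ and using $qs\equiv p$ gives $p^{n+1}\equiv q^{n+1}$; conversely, $p^{n+1}\equiv q^{n+1}$ reads $q^{n+1}s^{n+1}\equiv q^{n+1}$, so $s^{n+1}\equiv 1$, and then $s^np\equiv s^n(qs)=qs^{n+1}\equiv q\pmod\ell$.

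I do not expect a real obstacle here: the argument is a bookkeeping computation in $\GL_{n+1}(\mathbb{C})$ followed by arithmetic in $\mathbb{Z}/\ell\mathbb{Z}$. The one place to be careful is the indexing of the cyclic shift under conjugation, so that the single genuine constraint is correctly pinned down as $s^np\equiv q\pmod\ell$ and none of the ``automatic'' congruences secretly imposes anything extra; and one should not forget to invoke $(q,\ell)=1$ when cancelling powers of $q$.
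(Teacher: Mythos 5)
Your proposal is correct and follows essentially the same route as the paper: both verify the single relator on the explicit matrices, observe that all but one of the resulting congruences hold automatically from $p\equiv qs\pmod\ell$, and convert the remaining constraint $ps^n\equiv q\pmod\ell$ into $\ell\mid q^{n+1}-p^{n+1}$ by multiplying by $q^n$. Your version is if anything slightly more careful, since you justify the reverse implication by noting $(q,\ell)=1$ before cancelling, a point the paper passes over quickly.
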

\begin{proof}
Note that this condition implies that $\ell$ is relatively prime to both $p$ and $q$.  We just need to check when $A B^p = B^q A$.  Let $\{e_0, e_1, \ldots, e_n\}$ denote the standard basis of $\mathbb{C}^{n+1}$.   For $0 \leq i \leq n$,
\begin{equation}
AB^p e_i = A \left( \lambda^{ps^i} e_i \right) = \begin{cases}
c \lambda^{ps^i} e_{i+1} & \text{ if } i < n, \\
c \lambda^{ps^n} e_0 & \text{ if } i = n;
\end{cases}
\end{equation}
\begin{align}
B^q A e_i &= \begin{cases}
B^q c e_{i+1} & \text{ if } i < n, \\
B^q c e_0 & \text{ if } i = n,
\end{cases} \\
&= \begin{cases}
c \lambda^{q s^{i+1} } e_{i+1} & \text{ if } i < n, \\
c \lambda^q e_0 & \text{ if } i = n.
\end{cases}
\end{align}
For $i \leq n$, the condition $c \lambda^{ps^i} = c \lambda^{q s^{i+1} }$ is always satisfied, as it is equivalent to $p \equiv qs \pmod{\ell}$.  For $i = n$, the condition $c \lambda^{p s^n} = c \lambda^q$ is equivalent to $p s^n \equiv q \pmod{\ell}$.  Multiplying through by $q^n$, we have $p q^n s^n \equiv q^{n+1} \pmod{\ell}$.  As $p \equiv qs \pmod{\ell}$, this just says that $p^{n+1} \equiv q^{n+1} \pmod{\ell}$, or in other words that $\ell$ divides $q^{n+1} - p^{n+1}$.
\end{proof}
As an example, consider three-dimensional representations of $\BS(2, 5)$ of this form.  We want to pick some value for $\ell$ which divides
\begin{equation}
5^3 - 2^3 = 125 - 8 = 117 = 3^2\cdot 13.
\end{equation}
When $\ell = 3$, the solution to $2 \equiv 5s \pmod{3}$ is $s = 1$.  The primitive third roots of unity are $-\frac{1}{2} \pm \frac{\sqrt{3}}{2}$, so we get the representations
\begin{equation}
a \mapsto \begin{pmatrix}
0 & 0 & c \\
c & 0 & 0 \\
0 & c & 0
\end{pmatrix}, \qquad b \mapsto \begin{pmatrix}
-\frac{1}{2} \pm \frac{\sqrt{3}}{2} & 0 & 0 \\
0 & -\frac{1}{2} \pm \frac{\sqrt{3}}{2} & 0 \\
0 & 0 & -\frac{1}{2} \pm \frac{\sqrt{3}}{2}
\end{pmatrix}.
\end{equation}
When $\ell = 9$, the solution to $2 \equiv 5s \pmod{9}$ is $s = 4$.  If $\zeta$ is a primitive ninth root of unity, then we get the representations
\begin{equation}
a \mapsto \begin{pmatrix}
0 & 0 & c \\
c & 0 & 0 \\
0 & c & 0
\end{pmatrix}, \qquad b \mapsto \begin{pmatrix}
\zeta & 0 & 0 \\
0 & \zeta^4 & 0 \\
0 & 0 & \zeta^7
\end{pmatrix}.
\end{equation}
Notice that the former are reducible while the latter are not.  All that remains is to distinguish these two cases.  From Proposition~\ref{propEigenB}(iii), we know that it's a necessary condition that the eigenvalues $\lambda, \lambda^s, \ldots, \lambda^{s^n}$ be distinct.  Clearly this condition is sufficient as well -- if it holds, then $b$ fixes each $\laspan \{e_i\}$, while $a$ permutes them in a single orbit.  

\begin{corollary}
\label{corRepSimpleConditions}
Such a representation is irreducible if and only if $\ell$ does not divide $q^k - p^k$ for any $1 \leq k \leq n$.
\end{corollary}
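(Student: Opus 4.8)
The plan is to build on the remark immediately preceding the statement, which already reduces irreducibility of a representation of the form (\ref{eqCanonicalFormAB}) to the single condition that the eigenvalues $\lambda, \lambda^s, \ldots, \lambda^{s^n}$ of $B$ be pairwise distinct. First I would record that reduction carefully. Necessity of distinctness is exactly Proposition~\ref{propEigenB}(iii). For sufficiency, suppose the $\lambda^{s^i}$ are distinct and let $W \subseteq V$ be a nonzero subrepresentation. Since $B$ is diagonal with $n+1$ distinct diagonal entries, $V$ is the direct sum of the one-dimensional eigenspaces $\laspan\{e_i\}$ on which $B$ acts by distinct scalars, so any $B$-invariant subspace decomposes as $W = \bigoplus_i (W \cap \laspan\{e_i\})$; that is, $W = \laspan\{e_i : i \in S\}$ for some nonempty $S \subseteq \{0, 1, \ldots, n\}$. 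Because $A e_i$ is a nonzero multiple of $e_{i+1}$ with indices read modulo $n+1$, invariance of $W$ under $A$ forces $S$ to be stable under $i \mapsto i+1 \pmod{n+1}$, hence $S = \{0, 1, \ldots, n\}$ and $W = V$. Thus $V$ is simple.

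It then remains to translate ``the $\lambda^{s^i}$ are distinct'' into the arithmetic condition in the statement. Here I would use that $\ell$ is relatively prime to $q$ — part of Theorem~\ref{thmRepExistConditions}, and of Theorem~\ref{thmHFinite}(iii) — so that $q$ is a unit modulo $\ell$ and the congruence $p \equiv qs \pmod{\ell}$ has the unique solution $s \equiv p q^{-1} \pmod{\ell}$; in particular $s$ too is a unit modulo $\ell$. Since $\lambda$ is a primitive $\ell$-th root of unity, $\lambda^{s^i} = \lambda^{s^j}$ is equivalent to $s^i \equiv s^j \pmod{\ell}$, and as $s$ is invertible this says, for $0 \le i < j \le n$, that $s^{j-i} \equiv 1 \pmod{\ell}$. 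Writing $k = j - i$, the eigenvalues are distinct if and only if $s^k \not\equiv 1 \pmod{\ell}$ for all $1 \le k \le n$. Substituting $s \equiv p q^{-1}$ gives $s^k \equiv p^k q^{-k} \pmod{\ell}$, so $s^k \equiv 1 \pmod{\ell}$ iff $p^k \equiv q^k \pmod{\ell}$ iff $\ell \mid q^k - p^k$; combining these equivalences yields the corollary.

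I do not expect a genuine obstacle: the representation-theoretic content was already extracted in Theorem~\ref{thmRepExistConditions} and the surrounding discussion, and what is left is essentially bookkeeping. The one place warranting care is the sufficiency half of the reduction — making explicit that \emph{distinct} eigenvalues are what force a $B$-stable subspace to be spanned by standard basis vectors, which is precisely where the argument uses diagonalizability of $B$ together with the distinctness hypothesis, rather than merely invoking the informal picture of $b$ fixing each line while $a$ permutes them transitively. An alternative would be to phrase everything in terms of the orbit of $s$ under multiplication modulo $\ell$, but passing to the stated divisibility condition is cleaner and matches the form of Theorem~\ref{thmRepExistConditions}.
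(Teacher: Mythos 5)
Your proposal is correct and follows essentially the same route as the paper: reduce irreducibility to distinctness of the eigenvalues $\lambda, \lambda^s, \ldots, \lambda^{s^n}$ (which the paper handles informally in the remark just before the corollary), then convert $s^{j-i} \equiv 1 \pmod{\ell}$ into $\ell \mid q^k - p^k$ using the invertibility of $q$ modulo $\ell$. Your only addition is to make the sufficiency half of the reduction explicit via the eigenspace decomposition of a $B$-invariant subspace, which is a welcome tightening of the paper's ``clearly this condition is sufficient as well.''
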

\begin{proof}
We've seen already that irreducibility is equivalent to $B$ having distinct eigenvalues.  If $B$ does not have distinct eigenvalues, then $\lambda^{s^i} = \lambda^{s^j}$ for some $i$ and $j$ with $i \not \equiv j \pmod{\ell}$, or equivalently $s^i \equiv s^j \pmod{\ell}$.  This may be rewritten as $s^{i-j} \equiv 1 \pmod{\ell}$; multiplying through by $q^{i-j}$, we have $p^{i - j} \equiv q^{i - j} \pmod{\ell}$, i.e.\ $\ell$ divides $q^{i-j} - p^{i-j}$.  Finally, notice that the difference $i - j$ can take on any value between 1 and $n$.
\end{proof}

Applying this to the example above, we see that
\begin{equation}
5^2 - 2^2 = 25 - 4 = 21 = 3 \cdot 7, \qquad 5^1 - 2^1 = 3,
\end{equation}
which is in accord with our observation that taking $\ell = 9$ gave an irreducible representation, while taking $\ell = 3$ did not.

\section{Conclusion}

It is hoped that these results can be generalized to the case of arbitrary Baumslag-Solitar groups, which contain the groups examined here as subgroups: if $\Gamma = \BS(m,n)$, where $(m, n) = d$, then $\Delta = \langle a, b^d \rangle \leq \Gamma$ is isomorphic to $\BS(p, q)$, where $p = m/d$ and $q = n/d$.  Note that $\Delta$ is \emph{not}, in general, of finite index in $\Gamma$.

This paper was prepared under the direction of Andy Magid, whose supervision was invaluable in finding and solving the problem and submitting it for publication.  The author is also grateful to Nikolay Buskin for helpful discussions.

\bibliography{references}{}
\bibliographystyle{alpha}

\end{document}